\theoremstyle{plain}
\newtheorem{theorem}{Theorem}[section]
\newtheorem{corollary}[theorem]{Corollary}
\newtheorem{proposition}[theorem]{Proposition}
\theoremstyle{definition}
\newtheorem{definition}[theorem]{Definition}
\theoremstyle{remark}
\newtheorem{remark}{Remark}
\newcommand{\C}{\mathbb{C}}
\newcommand{\B}{\mathbb{B}}
\newcommand{\h}{\mathcal{H}}
\begin{document}

\title[An extension of Birkhoff--James orthogonality relations]
{An extension of Birkhoff--James orthogonality relations in semi-Hilbertian space operators}

\author{S. Mojtaba Enderami}
\address{School of Mathematics and Computer Sciences,
Damghan University, Damghan, P.O.BOX 36715-364, Iran}
\email{sm.enderami@std.du.ac.ir}

\author{Mortaza Abtahi${}^*$}
\address{School of Mathematics and Computer Sciences,
Damghan University, Damghan, P.O.BOX 36715-364, Iran}
\email{abtahi@du.ac.ir}

\author{Ali Zamani}
\address{School of Mathematics and Computer Sciences,
Damghan University, Damghan, P.O.BOX 36715-364, Iran}
\email{zamani.ali85@yahoo.com}

\thanks{${}^*$ Corresponding author; Tel: +982335220092,
Email: \texttt{abtahi@du.ac.ir}}

\subjclass{46C05; 47A05; 47A12; 47B65, 47L05}%
\keywords{Positive operator; semi-inner product; numerical radius; usual operator norm;
Birkhoff--James orthogonality}%

\begin{abstract}
  Let $\mathbb{B}(\mathcal{H})$ denote the $C^{\ast}$-algebra of all bounded
  linear operators on a Hilbert space $\big(\mathcal{H}, \langle\cdot, \cdot\rangle\big)$.
  Given a positive operator $A\in\B(\h)$, and a number $\lambda\in [0,1]$,
  a seminorm ${\|\cdot\|}_{(A,\lambda)}$ is defined on the set
  $\B_{A^{1/2}}(\h)$ of all operators in $\B(\h)$ having an $A^{1/2}$-adjoint.
  The seminorm ${\|\cdot\|}_{(A,\lambda)}$ is a combination of the sesquilinear form
  ${\langle \cdot, \cdot\rangle}_A$ and its induced seminorm ${\|\cdot\|}_A$.
  A characterization of Birkhoff--James orthogonality for operators with respect
  to the discussed seminorm is given. Moving $\lambda$ along the interval $[0,1]$, a wide spectrum
  of seminorms are obtained, having the $A$-numerical radius $w_A(\cdot)$ at the beginning (associated with $\lambda=0$)
  and the $A$-operator seminorm ${\|\cdot\|}_A$ at the end (associated with $\lambda=1$). Moreover, if $A=I$ the identity
  operator, the classical operator norm and numerical radius are obtained. Therefore, the results in this
  paper are significant extensions and generalizations of known results in this area.
\end{abstract}

\maketitle

\section{Introduction and preliminaries}

Let $\mathbb{B}(\mathcal{H})$ be the algebra of all bounded linear operators on a complex Hilbert space
$\mathcal{H}$ with an inner product $\langle \cdot,\cdot \rangle$ and the corresponding norm
$\|\cdot\| $. Let $I$ stand for the identity operator on $\mathcal{H}$.
Throughout this paper, we assume that $A\in\mathbb{B}(\mathcal{H})$ is a positive operator, which
induces a positive semidefinite sesquilinear
form ${\langle \cdot, \cdot\rangle}_A: \,\mathcal{H}\times \mathcal{H} \rightarrow \mathbb{C}$
defined by ${\langle x, y\rangle}_A = \langle Ax, y\rangle$.
We denote by ${\|\cdot\|}_A$ the seminorm induced by ${\langle \cdot, \cdot\rangle}_{A}$.
For the semi-Hilbertian space $\big(\mathcal{H}, {\|\cdot\|}_{A}\big)$ the $A$-Cauchy--Schwartz inequality holds, that is,
$\big|{\langle x, y\rangle}_{A}\big|\leq {\|x\|}_{A}{\|y\|}_{A}$ for all $x, y\in \mathcal{H}$.
For $T\in \mathbb{B}(\mathcal{H})$, an operator $S\in \mathbb{B}(\mathcal{H})$
is called an $A$-adjoint operator of $T$ if  ${\langle Tx, y\rangle}_{A} = {\langle x, Sy\rangle}_{A}$,
for every $x, y\in \mathcal{H}$.
The existence of an $A$-adjoint operator is not guaranteed.
The set of all operators admitting $A^{1/2}$-adjoints is denoted by $\mathbb{B}_{A^{1/2}}(\mathcal{H})$.
Clearly, ${\langle \cdot, \cdot\rangle}_{A}$ induces a seminorm on $\mathbb{B}_{A^{1/2}}(\mathcal{H})$.
Indeed, if $T\in\mathbb{B}_{A^{1/2}}(\mathcal{H})$, then
\begin{align*}
{\|T\|}_{A} =\sup\big\{{\|Tx\|}_{A}: \,\, x\in\mathcal{H}, {\|x\|}_A =1\big\} < +\infty.
\end{align*}

Notice that it may happen that ${\|T\|}_A = + \infty$
for some $T\in\mathbb{B}(\mathcal{H})\setminus \mathbb{B}_{A^{1/2}}(\mathcal{H})$.
It can be verified that, for $T\in\mathbb{B}_{A^{1/2}}(\mathcal{H})$,
we have ${\|Tx\|}_{A}\leq {\|T\|}_{A}{\|x\|}_{A}$ for all $x\in \mathcal{H}$.

Further, the $A$-minimum modulus of $T$, denoted by ${[T]}_{A}$, is defined as
\begin{align*}
{[T]}_{A} =\inf\big\{{\|Tx\|}_{A}: \,\, x\in\mathcal{H}, {\|x\|}_A =1\big\}.
\end{align*}

More details on semi-Hilbertian space operators can be found in \cite{Ar.Co.Go, Ma.Se.Su}.

The $A$-numerical radius and the $A$-Crawford number of $T\in\mathbb{B}(\mathcal{H})$ are defined,
respectively, by
\begin{align*}
w_{A}(T) & = \sup\Big\{\big|{\langle Tx, x\rangle}_A\big|: \,\,\, x\in \mathcal{H},\, {\|x\|}_A = 1\Big\}, \\
c_{A}(T) & = \inf\Big\{\big|{\langle Tx, x\rangle}_A\big|: \,\,\, x\in \mathcal{H},\, {\|x\|}_A = 1\Big\}.
\end{align*}

In particular, if we consider $A = I$ in the definitions of $A$-operator seminorm, $A$-minimum modulus,
$A$-numerical radius and $A$-Crawford number of $T$ then we get the classical operator norm,
minimum modulus, numerical radius and Crawford number, respectively, i.e.,
${\|T\|}_{A} = \|T\|, {[T]}_{A} = [T], w_{A}(T) = w(T)$ and $c_{A}(T) = c(T)$.
It is known that $w_{A}(\cdot)$ defines a seminorm on $\mathbb{B}_{A^{1/2}}(\mathcal{H})$, and that
\begin{equation*}
   \frac12 {\|T\|}_{A} \leq w_{A}(T)\leq {\|T\|}_{A},\ T\in \mathbb{B}_{A^{1/2}}(\mathcal{H}).
\end{equation*}

For other related information on
the numerical radius of operators in semi-Hilbertian spaces
we refer the reader to \cite{M.X.Z, Ro.Sa.Mi, Z.LAA} and the references therein.

In normed spaces, there are several notions of orthogonality, all of which are
generalizations of orthogonality in a Hilbert space. Among them, the Birkhoff--James orthogonality
is one of the most important. Given two elements $x,y$ in a normed space $(X,\|\cdot\|)$,
it is said that $x$ is orthogonal to $y$, in the Birkhoff--James sense \cite{B, J}, denoted by $x \perp_B y$, if
\begin{equation*}
  \|x + \xi y\| \geq \|x\|,\ \xi \in \C.
\end{equation*}

The Birkhoff--James orthogonality plays a central role in approximation theory.
On a Hilbert space $\mathcal{H}$, an operator $T\in\mathbb{B}(\mathcal{H})$ is
a best approximation of $S\in \mathbb{B}(\mathcal{H})$ from a linear subspace
$\mathbb{M}$ of $\mathbb{B}(\mathcal{H})$ if, and only if, $T$ is a Birkhoff--James
orthogonal projection of $S$ onto $\mathbb{M}$; see \cite{BCMWZ-2019} and references therein.
Bhatia and \v{S}emrl in \cite[Remark 3.1]{B.S} and Paul in \cite[Lemma 2]{Pa}
independently proved that $T\perp_B S$ if and only if there exists a sequence $\{x_n\}$
of unit vectors in $\mathcal{H}$ such that
\begin{align*}
  \lim_{n\to\infty} \|Tx_n\| = \|T\| \quad
\mbox{and} \quad \lim_{n\rightarrow\infty}\langle Tx_n, Sx_n\rangle = 0.
\end{align*}

It follows that if the Hilbert space $\mathcal{H}$ is finite-dimensional, then
$T\perp_B S$ if and only if there is a unit vector $x\in\mathcal{H}$ such that
$\|Tx\| = \|T\|$ and $\langle Tx, Sx\rangle = 0$. A number of authors have recently
extended the well-known result of Bhatia and \v{S}emrl;
see, e.g., \cite{B.F.P}, \cite{BCMWZ-2019}, \cite{Mal.Paul}, \cite{S.S.P}, \cite{Z.AFA}.

In this paper, we define a new seminorm on $\mathbb{B}_{A^{1/2}}(\mathcal{H})$, which generalizes
simultaneously the $A$-operator seminorm and the $A$-numerical radius.
We give a necessary and sufficient condition to hold that the seminorm of the sum
of elements in $\mathbb{B}_{A^{1/2}}(\mathcal{H})$ is equal to the sum of their seminorms.
We also characterize Birkhoff--James orthogonality of semi-Hilbertian space operators with respect to
this seminorm. Our results cover and extend some theorems in \cite{A.K, B.B, B.S, M.P.S, Z.AFA}.
In particular, related to a result due to Bhatia and \v{S}emrl \cite{B.S},
we give another equivalent condition of the Birkhoff--James orthogonality for Hilbert space operators.

\section{Main results}

We start the section with the following definition.

\begin{definition}\label{D.1}
  Let $(\h,\langle\cdot,\cdot\rangle)$ be a Hilbert space, $A\in\B(\h)$ be
  a positive operator and $\lambda\in [0,1]$. For every $T\in \mathbb{B}_{A^{1/2}}(\mathcal{H})$,
  define
  \begin{equation*}
    {\|T\|}_{_{(A, \lambda)}} = \sup \left\{\sqrt{\lambda{\|Tx\|}^2_{A} + (1 - \lambda){|{\langle Tx, x\rangle}_{A}|}^2}: \,x\in\mathcal{H}, {\|x\|}_{A} = 1\right\}.
  \end{equation*}
\end{definition}

\begin{remark}\label{R.2}
For $T\in \mathbb{B}_{A^{1/2}}(\mathcal{H})$ we have
${\|T\|}_{_{(A, 0)}} = w_{A}(T)$ and ${\|T\|}_{_{(A, 1)}} = {\|T\|}_{A}$.
\end{remark}

First of all, let us prove that ${\|\!\cdot\!\|}_{_{(A, \lambda)}}$ is a seminorm on $\mathbb{B}_{A^{1/2}}(\mathcal{H})$
sitting between
the $A$-numerical radius and $A$-operator seminorm.

\begin{proposition}\label{P.3}
  The function ${\|\!\cdot\!\|}_{_{(A, \lambda)}}$ is a seminorm on $\mathbb{B}_{A^{1/2}}(\mathcal{H})$
  and the following inequality holds for every $T\in \mathbb{B}_{A^{1/2}}(\mathcal{H})$;
  \begin{equation*}
  w_{A}(T) \leq {\|T\|}_{_{(A, \lambda)}} \leq {\|T\|}_{A}.
  \end{equation*}
\end{proposition}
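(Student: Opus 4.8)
The plan is to verify the three seminorm axioms (nonnegativity, absolute homogeneity, and the triangle inequality) and then the two-sided estimate, all of which rest on a single structural observation together with the $A$-Cauchy--Schwarz inequality. Nonnegativity is immediate, since ${\|T\|}_{_{(A,\lambda)}}$ is a supremum of nonnegative numbers; its finiteness will follow once the upper bound ${\|T\|}_{_{(A,\lambda)}} \leq {\|T\|}_A < +\infty$ is in hand. For absolute homogeneity, I would fix $\mu \in \C$ and a vector $x$ with ${\|x\|}_A = 1$; since ${\|(\mu T)x\|}_A = |\mu|\,{\|Tx\|}_A$ and $\big|{\langle (\mu T)x, x\rangle}_A\big| = |\mu|\,\big|{\langle Tx, x\rangle}_A\big|$, the factor $|\mu|^2$ can be pulled out of the radicand, and taking the supremum gives ${\|\mu T\|}_{_{(A,\lambda)}} = |\mu|\,{\|T\|}_{_{(A,\lambda)}}$.

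The crux is the triangle inequality, and here the main obstacle is that $\big|{\langle Tx, x\rangle}_A\big|$ is not linear in $T$, so a direct Minkowski-style computation on the radicand is awkward. I would circumvent this by introducing, for each fixed $x$ with ${\|x\|}_A = 1$, the linear map
\[
\Phi_x \colon \B_{A^{1/2}}(\h) \to \h \oplus \C, \qquad
\Phi_x(T) = \left(\sqrt{\lambda}\, Tx,\ \sqrt{1-\lambda}\,{\langle Tx, x\rangle}_A\right),
\]
where $\h \oplus \C$ is equipped with the seminorm $\|(u, z)\| = \sqrt{{\|u\|}_A^2 + |z|^2}$. The point is that the radicand defining ${\|T\|}_{_{(A,\lambda)}}$ is exactly ${\|\Phi_x(T)\|}^2$, so for each $x$ the map $T \mapsto \|\Phi_x(T)\|$ is a seminorm on $\B_{A^{1/2}}(\h)$, being the composition of a linear map with a seminorm. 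In particular it obeys the triangle inequality, whence for each $A$-unit vector $x$,
\[
\|\Phi_x(T+S)\| \leq \|\Phi_x(T)\| + \|\Phi_x(S)\| \leq {\|T\|}_{_{(A,\lambda)}} + {\|S\|}_{_{(A,\lambda)}}.
\]
Since the left-hand side equals $\sqrt{\lambda{\|(T+S)x\|}_A^2 + (1-\lambda)\big|{\langle (T+S)x, x\rangle}_A\big|^2}$, taking the supremum over all such $x$ yields ${\|T+S\|}_{_{(A,\lambda)}} \leq {\|T\|}_{_{(A,\lambda)}} + {\|S\|}_{_{(A,\lambda)}}$.

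Finally, both inequalities in the displayed estimate follow from the $A$-Cauchy--Schwarz inequality $\big|{\langle Tx, x\rangle}_A\big| \leq {\|Tx\|}_A {\|x\|}_A = {\|Tx\|}_A$ (valid when ${\|x\|}_A = 1$), which gives $\big|{\langle Tx, x\rangle}_A\big|^2 \leq {\|Tx\|}_A^2$. Because $\lambda \in [0,1]$, the convex combination in the radicand is sandwiched:
\[
\big|{\langle Tx, x\rangle}_A\big|^2 \leq \lambda{\|Tx\|}_A^2 + (1-\lambda)\big|{\langle Tx, x\rangle}_A\big|^2 \leq {\|Tx\|}_A^2 \leq {\|T\|}_A^2.
\]
Taking square roots and then the supremum over $A$-unit vectors $x$ yields $w_A(T) \leq {\|T\|}_{_{(A,\lambda)}} \leq {\|T\|}_A$, which also supplies the finiteness needed above. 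I expect the only genuinely delicate point to be the linear-embedding argument for the triangle inequality; the remaining estimates are routine applications of $A$-Cauchy--Schwarz.
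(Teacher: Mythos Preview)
Your proof is correct. The two-sided estimate is handled exactly as in the paper, via the $A$-Cauchy--Schwarz inequality and the fact that the radicand is a convex combination. For the triangle inequality, however, you take a genuinely different route: the paper expands $\lambda{\|(T+S)x\|}_A^2 + (1-\lambda)\big|{\langle (T+S)x,x\rangle}_A\big|^2$ directly, applies the triangle inequality to each summand, and then bounds the cross term $\lambda{\|Tx\|}_A{\|Sx\|}_A + (1-\lambda)\big|{\langle Tx,x\rangle}_A\big|\,\big|{\langle Sx,x\rangle}_A\big|$ by Cauchy--Bunyakovsky--Schwarz in $\mathbb{R}^2$. Your linear-embedding device $\Phi_x$ sidesteps this algebra by recognizing the radicand as the square of a seminorm on $\mathcal{H}\oplus\mathbb{C}$ pulled back through a linear map, so subadditivity is inherited for free and the result is transparently a supremum of seminorms. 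The trade-off is that the paper's explicit expansion is reused almost verbatim in the subsequent theorem characterizing when ${\|T+S\|}_{_{(A,\lambda)}} = {\|T\|}_{_{(A,\lambda)}} + {\|S\|}_{_{(A,\lambda)}}$, where one must track exactly which cross terms saturate; your packaging would have to be unwound at that point to recover the same information.
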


\begin{proof}
Let $T, S\in \mathbb{B}_{A^{1/2}}(\mathcal{H})$.
It is trivial that ${\|\alpha T\|}_{_{(A, \lambda)}} = |\alpha| {\|T\|}_{_{(A, \lambda)}}$ for every $\alpha \in \mathbb{C}$.
Therefore, to show that ${\|\!\cdot\!\|}_{_{(A, \lambda)}}$ is a seminorm on $\mathbb{B}_{A^{1/2}}(\mathcal{H})$,
it suffices to show that ${\|\!\cdot\!\|}_{_{(A, \lambda)}}$ is subadditive.

Let $x\in\mathcal{H}$ with ${\|x\|}_{A} = 1$. We have
\begin{align*}
  \lambda \big\|(T & + S)x\big\|^2_{A} + (1 - \lambda){\big|{\langle (T + S)x, x\rangle}_{A}\big|}^2\\
  & = \lambda{\big\|Tx + Sx\big\|}^2_{A} + (1 - \lambda){\big|{\langle Tx, x\rangle}_{A} + {\langle Sx, x\rangle}_{A}\big|}^2\\
  & \leq \lambda\big({\|Tx\|}_{A} + {\|Sx\|}_{A}\big)^2
                  + (1-\lambda)\big(|{\langle Tx, x\rangle}_{A}| + |{\langle Sx, x\rangle}_{A}|\big)^2\\
  & \leq \big(\lambda{\|Tx\|}^2_{A}+(1-\lambda)|{\langle Tx, x\rangle}_{A}|^2\big) \\
  & \hspace{7em}+ 2\big(\lambda{\|Tx\|}_{A}\,{\|Sx\|}_{A}+(1-\lambda)|{\langle Tx, x\rangle}_{A}|\,|{\langle Sx, x\rangle}_{A}|\big)\\
  & \hspace{9em} + \big(\lambda{\|Sx\|}^2_{A}+(1-\lambda)|{\langle Sx, x\rangle}_{A}|^2\big)\\
  & \leq \big(\lambda{\|Tx\|}^2_{A}+(1-\lambda)|{\langle Tx, x\rangle}_{A}|^2\big)\\
  & \hspace{4em} +2\sqrt{\lambda{\|Tx\|}^2_{A}+(1-\lambda)|{\langle Tx, x\rangle}_{A}|^2}
  \,\sqrt{\lambda{\|Sx\|}^2_{A}+(1-\lambda)|{\langle Sx, x\rangle}_{A}|^2}\\
  & \hspace{8em} + \big(\lambda{\|Sx\|}^2_{A}+(1-\lambda)|{\langle Sx, x\rangle}_{A}|^2\big)\\
  & \hspace{12em} \big(\mbox{by the Cauchy--Bunyakovsky--Schwarz inequality}\big)\\
  & \leq {\|T\|}^2_{_{(A, \lambda)}} + 2 {\|T\|}_{_{(A, \lambda)}}\,{\|S\|}_{_{(A, \lambda)}} + {\|S\|}^2_{_{(A, \lambda)}}
  = \bigl({\|T\|}_{_{(A, \lambda)}} + {\|S\|}_{_{(A, \lambda)}}\bigr)^2.
\end{align*}

Therefore,
\begin{align*}
\sqrt{\lambda{\big\|(T + S)x\big\|}^2_{A} + (1 - \lambda){\big|{\langle (T + S)x, x\rangle}_{A}\big|}^2}
\leq {\|T\|}_{_{(A, \lambda)}} + {\|S\|}_{_{(A, \lambda)}}.
\end{align*}

Taking supremum over all $x\in\mathcal{H}$ with ${\|x\|}_{A} = 1$ in the above inequality,
we get
\begin{align*}
{\|T + S\|}_{_{(A, \lambda)}} \leq {\|T\|}_{_{(A, \lambda)}} + {\|S\|}_{_{(A, \lambda)}}.
\end{align*}

Further, by the $A$-Cauchy--Schwarz inequality, for every $A$-unit vector $x\in\mathcal{H}$ we have
\begin{align*}
|{\langle Tx, x\rangle}_{A}|^2 &= \lambda |{\langle Tx, x\rangle}_{A}|^2 + (1-\lambda)|{\langle Tx, x\rangle}_{A}|^2\\
& \leq  \lambda{\|Tx\|}^2_{A} + (1-\lambda)|{\langle Tx, x\rangle}_{A}|^2\\
& \leq \lambda{\|Tx\|}^2_{A} + (1-\lambda){\|Tx\|}^2_{A} = {\|Tx\|}^2_{A},
\end{align*}
and hence
\begin{align*}
|{\langle Tx, x\rangle}_{A}| \leq \sqrt{\lambda{\|Tx\|}^2_{A} + (1-\lambda)|{\langle Tx, x\rangle}_{A}|^2}
\leq {\|Tx\|}_{A}.
\end{align*}

This implies that $w_{A}(T) \leq {\|T\|}_{_{(A, \lambda)}} \leq {\|T\|}_{A}$
which completes the proof.
\end{proof}

\begin{remark}\label{R.4}

Another lower and upper bound for the seminorm ${\| \cdot \|}_{_{(A, \lambda)}}$
of bounded linear operators can be presented as follows.
Let $T\in \mathbb{B}_{A^{1/2}}(\mathcal{H})$.
For every $x\in\mathcal{H}$ with ${\|x\|}_{A} = 1$, by the arithmetic geometric mean inequality, we have
\begin{align*}
2\sqrt{\lambda(1-\lambda)}c_{A}(T){\|Tx\|}_{A} & \leq 2\sqrt{\lambda(1-\lambda)}|{\langle Tx, x\rangle}_{A}|{\|Tx\|}_{A}\\
& \leq \lambda{\|Tx\|}^2_{A} + (1-\lambda)|{\langle Tx, x\rangle}_{A}|^2 \leq {\|T\|}^2_{_{(A, \lambda)}}
\end{align*}

Hence
\begin{align*}
2\sqrt{\lambda(1-\lambda)}c_{A}(T){\|Tx\|}_{A} \leq {\|T\|}^2_{_{(A, \lambda)}}.
\end{align*}

Taking supremum over all $x\in\mathcal{H}$ with ${\|x\|}_{A} = 1$ in the above inequality, we arrive at
\begin{align*}
2\sqrt{\lambda(1-\lambda)}c_{A}(T){\|T\|}_{A} \leq {\|T\|}^2_{_{(A, \lambda)}}.
\end{align*}

On the other hand, for every $A$-unit vector $x\in\mathcal{H}$, by \cite[p. 172]{Z.LAA}, we have
\begin{align*}
{\|Tx\|}^2_{A} \leq 2w_{A}(T)\Bigl(w_{A}(T) + \sqrt{w^2_{A}(T) - c^2_{A}(T)}\,\Bigr).
\end{align*}

Therefore,
\begin{align*}
\lambda{\|Tx\|}^2_{A} &+ (1-\lambda)|{\langle Tx, x\rangle}_{A}|^2 \\
&\leq 2\lambda w_{A}(T)\Bigl(w_{A}(T) + \sqrt{w^2_{A}(T) - c^2_{A}(T)}\,\Bigr) + (1-\lambda)w^2_{A}(T)\\
& = (1+\lambda)w^2_{A}(T) + 2\lambda w_{A}(T)\sqrt{w^2_{A}(T) - c^2_{A}(T)},
\end{align*}
which yields
\begin{align*}
{\|T\|}^2_{_{(A, \lambda)}} \leq (1+\lambda)w^2_{A}(T) + 2\lambda w_{A}(T)\sqrt{w^2_{A}(T) - c^2_{A}(T)}.
\end{align*}

\end{remark}

In the following theorem, we give a necessary and sufficient condition for the
equality ${\|T+S\|}_{_{(A, \lambda)}} = {\|T\|}_{_{(A, \lambda)}} + {\|S\|}_{_{(A, \lambda)}}$
to hold in $\mathbb{B}_{A^{1/2}}(\mathcal{H})$.

\begin{theorem}\label{T.5}

Let $T, S\in \mathbb{B}_{A^{1/2}}(\mathcal{H})$.
The following statements are equivalent.
\begin{itemize}
\item[(i)] ${\|T+S\|}_{_{(A, \lambda)}} = {\|T\|}_{_{(A, \lambda)}} + {\|S\|}_{_{(A, \lambda)}}$.
\item[(ii)] There exists a sequence $\{x_n\}$ of $A$-unit vectors in $\mathcal{H}$ such that
\begin{align*}
\lim_{n\to\infty} \Big(\lambda {\langle Sx_n, Tx_n\rangle}_{A}
     + (1-\lambda){\langle x_n, Tx_n \rangle}_{A} {\langle Sx_n,x_n \rangle}_{A}\Big)
     = {\|T\|}_{_{(A, \lambda)}}\,{\|S\|}_{_{(A, \lambda)}}.
\end{align*}
\end{itemize}

\end{theorem}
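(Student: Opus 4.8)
The plan is to linearize the seminorm by embedding everything into a single auxiliary semi-Hilbertian space, so that the equality in (i) turns into a tightness (equality) case of the Cauchy--Schwarz inequality. On $\mathcal{K} = \h \oplus \C$ I would put the positive semidefinite form $\langle (y_1,c_1),(y_2,c_2)\rangle_{\mathcal{K}} = {\langle y_1,y_2\rangle}_A + c_1\overline{c_2}$ (linear in the first slot, matching the paper's convention), which is a genuine semi-inner product and hence obeys Cauchy--Schwarz. For an $A$-unit vector $x$ and $T\in\B_{A^{1/2}}(\h)$, set $\Phi_T(x) = \bigl(\sqrt{\lambda}\,Tx,\ \sqrt{1-\lambda}\,{\langle Tx,x\rangle}_A\bigr)\in\mathcal{K}$. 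Then $\|\Phi_T(x)\|_{\mathcal{K}}^2 = \lambda\|Tx\|_A^2 + (1-\lambda)|{\langle Tx,x\rangle}_A|^2$, so that $\|T\|_{(A,\lambda)} = \sup_{\|x\|_A=1}\|\Phi_T(x)\|_{\mathcal{K}}$, and, crucially, $\Phi_{T+S}(x) = \Phi_T(x)+\Phi_S(x)$ by linearity of $T\mapsto Tx$ and of $T\mapsto{\langle Tx,x\rangle}_A$.

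The first bookkeeping step is to observe that the quantity inside the limit in (ii) is exactly $\overline{\langle\Phi_T(x_n),\Phi_S(x_n)\rangle_{\mathcal{K}}}$: expanding $\langle\Phi_T(x),\Phi_S(x)\rangle_{\mathcal{K}} = \lambda{\langle Tx,Sx\rangle}_A + (1-\lambda){\langle Tx,x\rangle}_A\overline{{\langle Sx,x\rangle}_A}$ and conjugating reproduces $\lambda{\langle Sx,Tx\rangle}_A + (1-\lambda){\langle x,Tx\rangle}_A{\langle Sx,x\rangle}_A$. Since the target $\|T\|_{(A,\lambda)}\|S\|_{(A,\lambda)}$ is a nonnegative real number, condition (ii) is therefore equivalent to $\langle\Phi_T(x_n),\Phi_S(x_n)\rangle_{\mathcal{K}}\to ab$, where I abbreviate $a = \|T\|_{(A,\lambda)}$ and $b = \|S\|_{(A,\lambda)}$.

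For (ii)$\Rightarrow$(i): by construction $\|\Phi_T(x_n)\|_{\mathcal{K}}\le a$ and $\|\Phi_S(x_n)\|_{\mathcal{K}}\le b$, and Cauchy--Schwarz gives $|\langle\Phi_T(x_n),\Phi_S(x_n)\rangle_{\mathcal{K}}|\le \|\Phi_T(x_n)\|_{\mathcal{K}}\|\Phi_S(x_n)\|_{\mathcal{K}}\le ab$. The hypothesis that this inner product tends to $ab$ then forces $\|\Phi_T(x_n)\|_{\mathcal{K}}\to a$ and $\|\Phi_S(x_n)\|_{\mathcal{K}}\to b$, and expanding $\|\Phi_{T+S}(x_n)\|_{\mathcal{K}}^2 = \|\Phi_T(x_n)\|_{\mathcal{K}}^2 + 2\,\mathrm{Re}\langle\Phi_T(x_n),\Phi_S(x_n)\rangle_{\mathcal{K}} + \|\Phi_S(x_n)\|_{\mathcal{K}}^2$ yields a limit of $(a+b)^2$; hence $\|T+S\|_{(A,\lambda)}\ge a+b$, and the reverse inequality is the subadditivity already established in Proposition \ref{P.3}.

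For the harder direction (i)$\Rightarrow$(ii): choose $A$-unit vectors $x_n$ with $\|\Phi_{T+S}(x_n)\|_{\mathcal{K}}\to a+b$, possible by definition of the supremum. Since $\|\Phi_{T+S}(x_n)\|_{\mathcal{K}}\le\|\Phi_T(x_n)\|_{\mathcal{K}}+\|\Phi_S(x_n)\|_{\mathcal{K}}\le a+b$, a squeezing argument gives $\|\Phi_T(x_n)\|_{\mathcal{K}}\to a$ and $\|\Phi_S(x_n)\|_{\mathcal{K}}\to b$, and the same expansion shows $\mathrm{Re}\langle\Phi_T(x_n),\Phi_S(x_n)\rangle_{\mathcal{K}}\to ab$. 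The main obstacle, and the only delicate point, is upgrading convergence of the real part to convergence of the full complex inner product: writing $z_n = \langle\Phi_T(x_n),\Phi_S(x_n)\rangle_{\mathcal{K}}$, one has $(\mathrm{Re}\,z_n)^2 + (\mathrm{Im}\,z_n)^2 = |z_n|^2 \le \|\Phi_T(x_n)\|_{\mathcal{K}}^2\|\Phi_S(x_n)\|_{\mathcal{K}}^2\to(ab)^2$ while $(\mathrm{Re}\,z_n)^2\to(ab)^2$, forcing $\mathrm{Im}\,z_n\to 0$ and hence $z_n\to ab$. Conjugating returns condition (ii). This scheme is uniform in the degenerate cases $a=0$ or $b=0$, since then $z_n\to 0 = ab$ automatically and no division is ever performed.
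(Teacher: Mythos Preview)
Your argument is correct and follows the same logical skeleton as the paper's proof---pick a near-extremal sequence, expand the square, extract convergence of the real part of the cross term, then upgrade to full complex convergence via a Cauchy--Schwarz bound on the modulus---but your packaging through the auxiliary semi-Hilbertian space $\mathcal{K}=\h\oplus\C$ and the linear-in-$T$ map $\Phi_T$ is a genuine streamlining. Where the paper applies two inequalities in succession (first the $A$-Cauchy--Schwarz inequality to bound $|{\langle Sx,Tx\rangle}_A|\le\|Tx\|_A\|Sx\|_A$, and then the real Cauchy--Bunyakovsky--Schwarz inequality on the $2$-vectors $(\sqrt{\lambda}\,\|Tx\|_A,\sqrt{1-\lambda}\,|{\langle Tx,x\rangle}_A|)$), your single Cauchy--Schwarz in $\mathcal{K}$ absorbs both at once and makes the limit analysis visibly a statement about near-parallel vectors in a fixed inner-product space. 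One small point you gloss over: in the (ii)$\Rightarrow$(i) direction the claim that $z_n\to ab$ forces $\|\Phi_T(x_n)\|_{\mathcal{K}}\to a$ and $\|\Phi_S(x_n)\|_{\mathcal{K}}\to b$ fails when, say, $a=0$ (then $z_n\equiv 0$ carries no information about $\|\Phi_S(x_n)\|_{\mathcal{K}}$); but in that case (i) is immediate from the seminorm triangle inequality applied to $T+S$ and $-T$, so there is no real gap.
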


\begin{proof}

(i)$\Rightarrow$(ii) Let ${\|T+S\|}_{_{(A, \lambda)}} = {\|T\|}_{_{(A, \lambda)}} + {\|S\|}_{_{(A, \lambda)}}$.
Then there exists a sequence of $A$-unit vectors $\{x_n\}$ in $\mathcal{H}$ such that
\begin{align}\label{I.1.T.5}
\lim_{n\to\infty} \Big(\lambda{\|(T+S)x_n\|}^2_{A} + (1 - \lambda){|{\langle (T+S)x_n, x_n\rangle}_{A}|}^2\Big)
= \big({\|T\|}_{_{(A, \lambda)}} + {\|S\|}_{_{(A, \lambda)}}\big)^2.
\end{align}

For every $n\in \mathbb{N}$, we have
\begin{align*}
\lambda&{\|(T+S)x_n\|}^2_{A} + (1 - \lambda){|{\langle (T+S)x_n, x_n\rangle}_{A}|}^2\\
& = \lambda{\|Tx_n\|}^2_{A} + 2\mathfrak{Re}\Big(\lambda{\langle Sx_n, Tx_n\rangle}_{A}\Big) + \lambda{\|Sx_n\|}^2_{A} + (1 - \lambda){|{\langle Tx_n, x_n\rangle}_{A}|}^2\\
& \qquad + 2\mathfrak{Re}\Big((1-\lambda){\langle x_n, Tx_n \rangle}_{A} {\langle Sx_n,x_n \rangle}_{A}\Big)
+ (1 - \lambda){|{\langle Sx_n, x_n\rangle}_{A}|}^2\\
& =  \lambda{\|Tx_n\|}^2_{A} + (1 - \lambda){|{\langle Tx_n, x_n\rangle}_{A}|}^2
+ \lambda{\|Sx_n\|}^2_{A} + (1 - \lambda){|{\langle Sx_n, x_n\rangle}_{A}|}^2\\
& \qquad + 2\mathfrak{Re}\Big(\lambda{\langle Sx_n, Tx_n\rangle}_{A} +
(1-\lambda){\langle x_n, Tx_n \rangle}_{A} {\langle Sx_n,x_n \rangle}_{A}\Big)\\
& \leq {\|T\|}^2_{_{(A, \lambda)}} + {\|S\|}^2_{_{(A, \lambda)}} + 2\Big|\lambda{\langle Sx_n, Tx_n\rangle}_{A} +
(1-\lambda){\langle x_n, Tx_n \rangle}_{A} {\langle Sx_n,x_n \rangle}_{A}\Big|\\
& \leq {\|T\|}^2_{_{(A, \lambda)}} + {\|S\|}^2_{_{(A, \lambda)}} + 2\Big(\lambda{\|Tx_n\|}_{A}\,{\|Sx_n\|}_{A} +
(1-\lambda)|{\langle Tx_n, x_n \rangle}_{A}|\,|{\langle Sx_n,x_n \rangle}_{A}|\Big)\\
& \qquad \qquad \qquad \qquad \qquad \qquad \qquad \qquad \big(\mbox{by the $A$-Cauchy--Schwarz inequality}\big)
\end{align*}
\begin{align*}
& \leq {\|T\|}^2_{_{(A, \lambda)}} + {\|S\|}^2_{_{(A, \lambda)}}\\
& \qquad + 2\sqrt{\lambda{\|Tx_n\|}^2_{A} + (1 - \lambda){|{\langle Tx_n, x_n\rangle}_{A}|}^2}\,
\sqrt{\lambda{\|Sx_n\|}^2_{A} + (1 - \lambda){|{\langle Sx_n, x_n\rangle}_{A}|}^2}\\
& \qquad \qquad \qquad \qquad \qquad \quad \big(\mbox{by the Cauchy--Bunyakovsky--Schwarz inequality}\big)\\
& \leq {\|T\|}^2_{_{(A, \lambda)}} + {\|S\|}^2_{_{(A, \lambda)}} + 2{\|T\|}_{_{(A, \lambda)}}\,{\|S\|}_{_{(A, \lambda)}}
= \big({\|T\|}_{_{(A, \lambda)}} + {\|S\|}_{_{(A, \lambda)}}\big)^2,
\end{align*}

and therefore, from \eqref{I.1.T.5}, we obtain
\begin{align}\label{I.2.T.5}
\lim_{n\to\infty}\mathfrak{Re}\Big(\lambda{\langle Sx_n, Tx_n\rangle}_{A} +
(1-\lambda){\langle x_n, Tx_n \rangle}_{A} {\langle Sx_n,x_n \rangle}_{A}\Big) = {\|T\|}_{_{(A, \lambda)}}\,{\|S\|}_{_{(A, \lambda)}}.
\end{align}

In addition, for every $n\in \mathbb{N}$, we have
\begin{align*}
\mathfrak{Re}^2&\Big(\lambda{\langle Sx_n, Tx_n\rangle}_{A} +
(1-\lambda){\langle x_n, Tx_n \rangle}_{A} {\langle Sx_n,x_n \rangle}_{A}\Big) \\
&+ \mathfrak{Im}^2\Big(\lambda{\langle Sx_n, Tx_n\rangle}_{A} +
(1-\lambda){\langle x_n, Tx_n \rangle}_{A} {\langle Sx_n,x_n \rangle}_{A}\Big)\\
&= \Big|\lambda{\langle Sx_n, Tx_n\rangle}_{A} +
(1-\lambda){\langle x_n, Tx_n \rangle}_{A} {\langle Sx_n,x_n \rangle}_{A}\Big|^2\leq {\|T\|}^2_{_{(A, \lambda)}}\,{\|S\|}^2_{_{(A, \lambda)}},
\end{align*}

and so by \eqref{I.2.T.5}, we conclude that
\begin{align*}
\lim_{n\to\infty}\mathfrak{Im}\Big(\lambda{\langle Sx_n, Tx_n\rangle}_{A} +
(1-\lambda){\langle x_n, Tx_n \rangle}_{A} {\langle Sx_n,x_n \rangle}_{A}\Big) = 0.
\end{align*}

It follows from \eqref{I.2.T.5} that
\begin{align*}
\lim_{n\to\infty}\Big(\lambda {\langle Sx_n, Tx_n\rangle}_{A}
+(1-\lambda){\langle x_n, Tx_n \rangle}_{A} {\langle Sx_n,x_n \rangle}_{A}\Big) = {\|T\|}_{_{(A, \lambda)}}\,{\|S\|}_{_{(A, \lambda)}}.
\end{align*}

(ii)$\Rightarrow$(i) Suppose that for a sequence of $A$-unit vectors $\{x_n\}$ in $\mathcal{H}$
we have
\begin{align*}
\lim_{n\to\infty}\Big(\lambda {\langle Sx_n, Tx_n\rangle}_{A}
+(1-\lambda){\langle x_n, Tx_n \rangle}_{A} {\langle Sx_n,x_n \rangle}_{A}\Big) = {\|T\|}_{_{(A, \lambda)}}\,{\|S\|}_{_{(A, \lambda)}}.
\end{align*}

Hence
\begin{align*}
\displaystyle{\lim_{n\to\infty}}\mathfrak{Re}\Big(\lambda {\langle Sx_n, Tx_n\rangle}_{A}
+(1-\lambda){\langle x_n, Tx_n \rangle}_{A} {\langle Sx_n,x_n \rangle}_{A}\Big) = {\|T\|}_{_{(A, \lambda)}}\,{\|S\|}_{_{(A, \lambda)}}.
\end{align*}

Since, for every $n\in \mathbb{N}$,
\begin{align*}
\Big|&\lambda{\langle Sx_n, Tx_n\rangle}_{A} +
(1-\lambda){\langle x_n, Tx_n \rangle}_{A} {\langle Sx_n,x_n \rangle}_{A}\Big|^2\\
& \leq \Big(\lambda{\|Tx_n\|}^2_{A} + (1 - \lambda){|{\langle Tx_n, x_n\rangle}_{A}|}^2\Big)\,
\Big(\lambda{\|Sx_n\|}^2_{A} + (1 - \lambda){|{\langle Sx_n, x_n\rangle}_{A}|}^2\Big)\\
& \leq \Big(\lambda{\|Tx_n\|}^2_{A} + (1 - \lambda){|{\langle Tx_n, x_n\rangle}_{A}|}^2\Big)\,{\|S\|}^2_{_{(A, \lambda)}}
\leq {\|T\|}^2_{_{(A, \lambda)}}\,{\|S\|}^2_{_{(A, \lambda)}},
\end{align*}

we obtain
\begin{align*}
\displaystyle{\lim_{n\to\infty}}
\Big(\lambda{\|Tx_n\|}^2_{A} + (1 - \lambda){|{\langle Tx_n, x_n\rangle}_{A}|}^2\Big) = {\|T\|}^2_{_{(A, \lambda)}}.
\end{align*}

By a similar argument, we get
\begin{align*}
\lim_{n\to\infty}
\Big(\lambda{\|Sx_n\|}^2_{A} + (1 - \lambda){|{\langle Sx_n, x_n\rangle}_{A}|}^2\Big) = {\|S\|}^2_{_{(A, \lambda)}}.
\end{align*}

Therefore,
\begin{align*}
\big({\|T\|}_{_{(A, \lambda)}} + {\|S\|}_{_{(A, \lambda)}}\big)^2 &=\displaystyle{\lim_{n\to\infty}}
\Big(\lambda{\|Tx_n\|}^2_{A} + (1 - \lambda){|{\langle Tx_n, x_n\rangle}_{A}|}^2\Big)\\
&\quad + 2\displaystyle{\lim_{n\to\infty}}\mathfrak{Re}\Big(\lambda {\langle Sx_n, Tx_n\rangle}_{A}
+(1-\lambda){\langle x_n, Tx_n \rangle}_{A} {\langle Sx_n,x_n \rangle}_{A}\Big)\\
&\qquad + \displaystyle{\lim_{n\to\infty}}
\Big(\lambda{\|Sx_n\|}^2_{A} + (1 - \lambda){|{\langle Sx_n, x_n\rangle}_{A}|}^2\Big)
\\& = \displaystyle{\lim_{n\to\infty}}\Big(\lambda{\|(T+S)x_n\|}^2_{A} + (1 - \lambda){|{\langle (T+S)x_n, x_n\rangle}_{A}|}^2\Big)\\
&\leq {\|T+S\|}^2_{_{(A, \lambda)}}
\leq \big({\|T\|}_{_{(A, \lambda)}} + {\|S\|}_{_{(A, \lambda)}}\big)^2.
\end{align*}

Hence ${\|T+S\|}_{_{(A, \lambda)}} = {\|T\|}_{_{(A, \lambda)}} + {\|S\|}_{_{(A, \lambda)}}$.
\end{proof}

As an immediate consequence of the preceding theorem,
we obtain the following result due to Barraa and Boumazgour \cite{B.B}.

\begin{corollary}[{\cite[Theorem~2.1]{B.B}}]\label{C.6}

Let $T, S\in \mathbb{B}(\mathcal{H})$. Then $\|T + S\| = \|T\| + \|S\|$ if and only if
there exists a sequence $\{x_n\}$ of unit vectors in $\mathcal{H}$ such that
\begin{align*}
\displaystyle{\lim_{n\to\infty}}\langle Sx_n, Tx_n\rangle = \|T\|\|S\|.
\end{align*}

\end{corollary}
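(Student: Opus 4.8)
The plan is to obtain this corollary as the special case $A = I$, $\lambda = 1$ of Theorem~\ref{T.5}. First I would record the simplifications that occur under these two choices. Taking $A = I$, the sesquilinear form ${\langle\cdot,\cdot\rangle}_A$ collapses to the ambient inner product $\langle\cdot,\cdot\rangle$, the seminorm ${\|\cdot\|}_A$ becomes the usual operator norm $\|\cdot\|$, and the $A$-unit vectors are precisely the unit vectors of $\mathcal{H}$. Moreover, since $A^{1/2} = I$, every $T\in\mathbb{B}(\mathcal{H})$ admits an $I$-adjoint, namely its Hilbert-space adjoint $T^{*}$, so that $\mathbb{B}_{A^{1/2}}(\mathcal{H}) = \mathbb{B}(\mathcal{H})$ and the theorem is applicable to arbitrary bounded operators.

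Next I would fix $\lambda = 1$. By Remark~\ref{R.2}, ${\|T\|}_{_{(A, 1)}} = {\|T\|}_A$, so with $A = I$ the seminorm ${\|\cdot\|}_{_{(I, 1)}}$ is exactly the operator norm. Hence condition~(i) of Theorem~\ref{T.5} reads $\|T+S\| = \|T\| + \|S\|$, which is the equality appearing in the corollary.

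It then remains to simplify condition~(ii). Setting $\lambda = 1$ annihilates the coefficient $(1-\lambda)$ of the second summand, so the limit expression in~(ii) reduces to $\lim_{n\to\infty}{\langle Sx_n, Tx_n\rangle}_A$, which under $A = I$ is $\lim_{n\to\infty}\langle Sx_n, Tx_n\rangle$; simultaneously the product ${\|T\|}_{_{(A, \lambda)}}\,{\|S\|}_{_{(A, \lambda)}}$ on the right-hand side becomes $\|T\|\,\|S\|$. Thus~(ii) turns into the assertion that there is a sequence of unit vectors $\{x_n\}$ with $\lim_{n\to\infty}\langle Sx_n, Tx_n\rangle = \|T\|\,\|S\|$, and invoking the equivalence (i)$\Leftrightarrow$(ii) of Theorem~\ref{T.5} finishes the argument. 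Since the whole derivation is a direct substitution, there is no genuine obstacle here; the only point that deserves explicit care is confirming the identification $\mathbb{B}_{A^{1/2}}(\mathcal{H}) = \mathbb{B}(\mathcal{H})$ at $A = I$, which legitimizes applying Theorem~\ref{T.5} to every pair $T, S\in\mathbb{B}(\mathcal{H})$.
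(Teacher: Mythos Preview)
Your proposal is correct and follows exactly the paper's approach: the paper's proof is the one-line ``Let $A = I$ and $\lambda = 1$, and apply Theorem~\ref{T.5},'' and you have simply spelled out the substitutions in detail. The extra care you take in noting that $\mathbb{B}_{A^{1/2}}(\mathcal{H}) = \mathbb{B}(\mathcal{H})$ when $A = I$ is a nice touch that the paper leaves implicit.
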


\begin{proof}
  Let $A =I$ and $\lambda = 1$, and apply Theorem \ref{T.5}.
\end{proof}

As another application of Theorem \ref{T.5}, by letting $A = I$ and $\lambda = 0$,
we get a characterization of the equality $w(T + S) = w(T) + w(S)$ for Hilbert space operators (see \cite{A.K}).

\begin{corollary}[{\cite[Proposition~3.6]{A.K}}]\label{C.6.5}

For $T, S\in \mathbb{B}(\mathcal{H})$,
the equality $w(T + S) = w(T) + w(S)$ holds if and only if
there exists a sequence $\{x_n\}$ of unit vectors in $\mathcal{H}$ such that
\begin{align*}
\lim_{n\to\infty}\langle x_n, Tx_n \rangle \langle Sx_n,x_n \rangle = w(T)w(S).
\end{align*}

\end{corollary}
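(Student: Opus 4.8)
The plan is to obtain this corollary as a direct specialization of Theorem~\ref{T.5}, exactly as Corollary~\ref{C.6} was derived, except that here I would make the parameter choices $A = I$ and $\lambda = 0$ rather than $A=I$ and $\lambda = 1$. First I would record what each ingredient of Theorem~\ref{T.5} becomes under these choices. Taking $A = I$, the form ${\langle\cdot,\cdot\rangle}_A$ collapses to the ordinary inner product $\langle\cdot,\cdot\rangle$, the seminorm ${\|\cdot\|}_A$ becomes the usual norm $\|\cdot\|$, every $A$-unit vector is simply a unit vector, and $\mathbb{B}_{A^{1/2}}(\mathcal{H}) = \mathbb{B}(\mathcal{H})$, so the hypothesis $T, S \in \mathbb{B}_{A^{1/2}}(\mathcal{H})$ imposes no restriction. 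Taking $\lambda = 0$, Remark~\ref{R.2} gives ${\|T\|}_{_{(A,0)}} = w_A(T)$, which together with $A = I$ yields ${\|T\|}_{_{(I,0)}} = w(T)$, and likewise for $S$ and for $T+S$.

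Next I would substitute these identifications into the two equivalent statements of Theorem~\ref{T.5}. Condition~(i), namely ${\|T+S\|}_{_{(A,\lambda)}} = {\|T\|}_{_{(A,\lambda)}} + {\|S\|}_{_{(A,\lambda)}}$, becomes $w(T+S) = w(T) + w(S)$, which is precisely the equality in the corollary. In condition~(ii), the coefficient $\lambda = 0$ annihilates the term $\lambda{\langle Sx_n, Tx_n\rangle}_A$, while $1 - \lambda = 1$, so the limit expression reduces to
\[
\lim_{n\to\infty} \langle x_n, Tx_n\rangle \langle Sx_n, x_n\rangle = w(T)\, w(S),
\]
which is exactly the sequential condition asserted in the statement. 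I would then conclude that the equivalence (i)$\Leftrightarrow$(ii) of Theorem~\ref{T.5}, read under $A = I$ and $\lambda = 0$, is the desired equivalence.

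Since every quantity here is a faithful renaming of an object already appearing in Theorem~\ref{T.5}, there is no genuine obstacle to overcome. The only point requiring a moment's care is to confirm that the surviving term in~(ii) is the product $\langle x_n, Tx_n\rangle \langle Sx_n, x_n\rangle$ rather than the mixed term $\langle Sx_n, Tx_n\rangle$; this is immediate from $\lambda = 0$, which kills the first summand and keeps the second with unit coefficient. The proof therefore reduces to the single observation that the equivalence of Theorem~\ref{T.5} holds for all admissible $A$ and $\lambda$, and in particular at this endpoint of the parameter interval.
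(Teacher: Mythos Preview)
Your proposal is correct and follows exactly the paper's own approach: the paper derives this corollary simply by applying Theorem~\ref{T.5} with $A=I$ and $\lambda=0$, and your write-up spells out precisely those substitutions.
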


An operator $T\in \mathbb{B}(\mathcal{H})$ is called \emph{Birkhoff--James numerical radius orthogonal}
to $S\in \mathbb{B}(\mathcal{H})$, denoted by $T\perp^w_BS$, if $w(T+\xi S)\geq w(T)$,
for all $\xi \in \mathbb{C}$. See \cite{M.P.S} for characterization of the Birkhoff-James orthogonality
with respect to numerical radius for Hilbert space operators. Analogously, we introduce a concept of
$(A, \lambda)$-Birkhoff--James orthogonality for semi-Hilbertian space operators.

\begin{definition}\label{de.31}
  Let $T, S\in \mathbb{B}_{A^{1/2}}(\mathcal{H})$. The operator $T$ is called $(A, \lambda)$-Birkhoff--James
  orthogonal to $S$, in short, $T \perp_{_{(A, \lambda)}} S$, if
  \begin{equation*}
     \|T + \xi S\|_{_{(A, \lambda)}}\geq {\|T\|}_{_{(A, \lambda)}}, \ \xi \in \mathbb{C}.
  \end{equation*}
\end{definition}

Obviously, this is a generalization of both the concept of Birkhoff--James orthogonality
and the concept of Birkhoff--James numerical radius orthogonality of Hilbert space operators.

In the next theorem, some characterizations of $(A, \lambda)$-Birkhoff--James orthogonality for bounded
linear operators in semi-Hilbertian spaces are presented.

\begin{theorem}\label{T.7}

  Let $T, S\in \mathbb{B}_{A^{1/2}}(\mathcal{H})$.
  Then the following statements are equivalent.
  \begin{itemize}
  \item[(i)] For each $\theta\in[0, 2\pi)$ there exits a sequence $\{x_{n}\}$
   of $A$-unit vectors in $\mathcal{H}$ such that the following two conditions hold.
  \begin{itemize}
  \item[(i-1)]
  $\displaystyle{\lim_{n\to\infty}}\Big(\lambda{\|Tx_{n}\|}^2_{A} + (1-\lambda){|{\langle Tx_{n}, x_{n}\rangle}_{A}|}^2\Big)
  = {\|T\|}^2_{_{(A, \lambda)}}$,
  \item[(i-2)]
  $\displaystyle{\lim_{n\to\infty}}\mathfrak{Re}\Big(e^{i\theta}\lambda{\langle Sx_{n}, Tx_{n}\rangle}_{A}
  + e^{i\theta}(1-\lambda){\langle x_{n}, Tx_{n}\rangle}_{A} {\langle Sx_{n}, x_{n}\rangle}_{A}\Big)\geq 0$.
  \end{itemize}
  \item[(ii)] For all $\xi \in \mathbb{C}$,
  ${\|T + \xi S\|}^2_{_{(A, \lambda)}}\geq {\|T\|}^2_{_{(A, \lambda)}} + |\xi|^2m^2_{_{(A, \lambda)}}(S)$,
  where
  \begin{align*}
  m_{_{(A, \lambda)}}(S) = \inf \Big\{\sqrt{\lambda{\|Sx\|}^2_{A} + (1 - \lambda){|{\langle Sx, x\rangle}_{A}|}^2}: \,
    x\in\mathcal{H}, {\|x\|}_{A} = 1\Big\}.
  \end{align*}
  \item[(iii)] $T \perp_{_{(A, \lambda)}} S$.
  \end{itemize}

\end{theorem}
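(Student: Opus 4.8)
The plan is to prove the cycle (i) $\Rightarrow$ (ii) $\Rightarrow$ (iii) $\Rightarrow$ (i), and everything rests on one expansion. For an $A$-unit vector $x$ set
\[
  F_R(x) = \lambda{\|Rx\|}^2_{A} + (1-\lambda){|{\langle Rx, x\rangle}_{A}|}^2 \qquad (R\in\mathbb{B}_{A^{1/2}}(\mathcal{H})),
\]
so that ${\|R\|}^2_{_{(A, \lambda)}} = \sup_{{\|x\|}_A=1}F_R(x)$ and $m^2_{_{(A, \lambda)}}(S) = \inf_{{\|x\|}_A=1}F_S(x)$. Writing
\[
  \Phi(x) = \lambda{\langle Sx, Tx\rangle}_{A} + (1-\lambda){\langle x, Tx\rangle}_{A}{\langle Sx, x\rangle}_{A},
\]
a direct expansion of ${\|(T+\xi S)x\|}^2_{A}$ and ${|{\langle (T+\xi S)x, x\rangle}_{A}|}^2$ yields
\[
  F_{T+\xi S}(x) = F_T(x) + {|\xi|}^2 F_S(x) + 2\mathfrak{Re}\big(\xi\,\Phi(x)\big), \qquad \xi\in\mathbb{C}.
\]
Here $\Phi$ is exactly the quantity appearing in Theorem \ref{T.5}, and the $A$-Cauchy--Schwarz and Cauchy--Bunyakovsky--Schwarz inequalities give $|\Phi(x)|\leq\sqrt{F_T(x)}\sqrt{F_S(x)}\leq {\|T\|}_{_{(A, \lambda)}}{\|S\|}_{_{(A, \lambda)}}$; thus $F_S$ and $\Phi$ are uniformly bounded on the $A$-unit sphere, a fact that will keep every error term controlled.

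For (i) $\Rightarrow$ (ii), fix $\xi\in\mathbb{C}$ and write $\xi = re^{i\theta}$ with $r={|\xi|}\geq 0$ and $\theta\in[0, 2\pi)$. I would feed the sequence $\{x_n\}$ furnished by (i) for this particular $\theta$ into the identity and use ${\|T+\xi S\|}^2_{_{(A, \lambda)}}\geq F_{T+\xi S}(x_n)$ for every $n$. Passing to the limit, (i-1) forces $F_T(x_n)\to{\|T\|}^2_{_{(A, \lambda)}}$, the bound $F_S(x_n)\geq m^2_{_{(A, \lambda)}}(S)$ handles the middle term, and (i-2) makes the cross term $2r\,\mathfrak{Re}(e^{i\theta}\Phi(x_n))$ asymptotically nonnegative; taking $\liminf$ produces precisely ${\|T+\xi S\|}^2_{_{(A, \lambda)}}\geq{\|T\|}^2_{_{(A, \lambda)}} + {|\xi|}^2 m^2_{_{(A, \lambda)}}(S)$. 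The implication (ii) $\Rightarrow$ (iii) is immediate, since $m^2_{_{(A, \lambda)}}(S)\geq 0$ collapses (ii) to ${\|T+\xi S\|}_{_{(A, \lambda)}}\geq{\|T\|}_{_{(A, \lambda)}}$, i.e.\ $T\perp_{_{(A, \lambda)}}S$.

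The heart of the matter is (iii) $\Rightarrow$ (i). Fixing $\theta\in[0, 2\pi)$, I would test orthogonality along the ray $\xi = te^{i\theta}$ with $t>0$: from ${\|T+te^{i\theta}S\|}_{_{(A, \lambda)}}\geq{\|T\|}_{_{(A, \lambda)}}$ one selects, for each $t$, an $A$-unit vector $x_t$ with $F_{T+te^{i\theta}S}(x_t) > {\|T\|}^2_{_{(A, \lambda)}} - t^2$. Inserting this into the expansion and using $F_T(x_t)\leq{\|T\|}^2_{_{(A, \lambda)}}$ together with the uniform bounds on $F_S$ and $\Phi$, I would first isolate
\[
  F_T(x_t) > {\|T\|}^2_{_{(A, \lambda)}} - t^2\big(1+{\|S\|}^2_{_{(A, \lambda)}}\big) - 2t\,{\|T\|}_{_{(A, \lambda)}}{\|S\|}_{_{(A, \lambda)}},
\]
so that letting $t\to 0^+$ forces $F_T(x_t)\to{\|T\|}^2_{_{(A, \lambda)}}$, which is (i-1). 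Discarding the nonnegative term ${\|T\|}^2_{_{(A, \lambda)}} - F_T(x_t)$ from the same inequality and dividing by $2t$ gives $\mathfrak{Re}(e^{i\theta}\Phi(x_t)) > -\tfrac{t}{2}\big(1+{\|S\|}^2_{_{(A, \lambda)}}\big)$, whence $\liminf_{t\to 0^+}\mathfrak{Re}(e^{i\theta}\Phi(x_t))\geq 0$.

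The step just sketched is where I expect the main obstacle, for two subtleties remain. First, the construction only delivers $\liminf\geq 0$ in (i-2) rather than a genuine limit; second, (i-1) and (i-2) must be arranged to hold along \emph{one and the same} sequence. I would settle both at once by choosing $t_n\to 0^+$ and then passing to a subsequence of $\{x_{t_n}\}$ along which $\mathfrak{Re}(e^{i\theta}\Phi(\cdot))$ converges to its $\liminf$: on that subsequence (i-2) holds with an honest nonnegative limit, while (i-1) is inherited from the convergence $F_T(x_{t_n})\to{\|T\|}^2_{_{(A, \lambda)}}$. No ingredient beyond the opening identity and the uniform boundedness supplied by the two Cauchy--Schwarz inequalities is required; it is precisely that boundedness which keeps the remainders $O(t)$ and makes the two limiting conditions simultaneously attainable.
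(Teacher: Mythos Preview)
Your proof is correct and follows essentially the same route as the paper's: the same cycle (i)$\Rightarrow$(ii)$\Rightarrow$(iii)$\Rightarrow$(i), the same expansion identity for $F_{T+\xi S}$, and the same selection-and-limit argument for (iii)$\Rightarrow$(i). The only differences are cosmetic---you abbreviate with $F_R$ and $\Phi$, use a continuous parameter $t\to 0^+$ with slack ${\|T\|}^2_{_{(A,\lambda)}}-t^2$ where the paper uses $\xi=e^{i\theta}/n$ with slack $\bigl({\|T\|}_{_{(A,\lambda)}}-1/n^2\bigr)^2$, and you handle the degenerate case ${\|T\|}_{_{(A,\lambda)}}=0$ implicitly rather than setting it aside.
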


\begin{proof}

(i)$\Rightarrow$(ii) Suppose that (i) holds and let $\xi\in \C$.
Then there exits $\theta \in [0,2\pi)$ such that $\xi =|\xi|e^{i\theta}$.
Let $\{x_{n}\}$ be a sequence of $A$-unit vectors in $\mathcal{H}$ such that (i-1) and (i-2)
hold. For $n\in \mathbb{N}$ we have
\begin{align*}
{\|T + \xi S\|}^2_{_{(A, \lambda)}} & \geq \lambda{\big\|Tx_{n} +|\xi| e^{i\theta} Sx_{n}\big\|}^2_{A}
+ (1 - \lambda){\big|{\langle (T+|\xi| e^{i\theta} S)x_n, x_{n}\rangle}_{A}\big|}^2\\
& = \lambda{\|Tx_n\|}^2_{A} + (1 - \lambda){|{\langle Tx_n, x_n\rangle}_{A}|}^2\\
& \qquad + |\xi|^2\Big(\lambda{\|Sx_n\|}^2_{A} + (1 - \lambda){|{\langle Sx_n, x_n\rangle}_{A}|}^2\Big)\\
& \qquad \quad + 2|\xi|\mathfrak{Re}\Big(e^{i\theta}\lambda{\langle Sx_n, Tx_n\rangle}_{A} +
e^{i\theta}(1-\lambda){\langle x_n, Tx_n \rangle}_{A} {\langle Sx_n,x_n \rangle}_{A}\Big),
\end{align*}
and so
\begin{align*}
{\|T + \xi S\|}^2_{_{(A, \lambda)}} & \geq {\|T\|}^2_{_{(A, \lambda)}} +
|\xi|^2 \limsup_{n\to\infty}\Big(\lambda{\|Sx_n\|}^2_{A} + (1 - \lambda){|{\langle Sx_n, x_n\rangle}_{A}|}^2\Big)\\
& \geq {\|T\|}^2_{_{(A, \lambda)}} + |\xi|^2m^2_{_{(A, \lambda)}}(S).
\end{align*}

Hence ${\|T + \xi S\|}^2_{_{(A, \lambda)}}\geq {\|T\|}^2_{_{(A, \lambda)}} + |\xi|^2m^2_{_{(A, \lambda)}}(S)$.

(ii)$\Rightarrow$(iii) This implication is trivial.

(iii)$\Rightarrow$(i) Let $T \perp_{_{(A, \lambda)}} S$.
Then ${\|T + \xi S\|}_{_{(A, \lambda)}} \geq {\|T\|}_{_{(A, \lambda)}}$
for every $\xi \in \C$. We may assume that ${\|T\|}_{_{(A, \lambda)}}\neq0$ otherwise (i) trivially holds.
Let $\theta\in [0,2\pi)$.
Thus ${\|T\|}_{_{(A, \lambda)}} \leq {\big\|T + \frac{e^{i\theta}}{n} S\big\|}_{_{(A, \lambda)}}$
for all $n\in \mathbb{N}$.
Since ${\|T\|}_{_{(A, \lambda)}}>0$, for sufficiently large $n$, we have
\begin{align*}
0 < {\|T\|}_{_{(A, \lambda)}} - \frac{1}{n^2} < {\|T\|}_{_{(A, \lambda)}}
\leq {\Big\|T + \frac{e^{i\theta}}{n}S\Big\|}_{_{(A, \lambda)}}.
\end{align*}

So, there exits a sequence $\{x_{n}\}$ of $A$-unit vectors in $\mathcal{H}$ such that
\begin{align}\label{I.1.T.7}
\Bigl({\|T\|}_{_{(A, \lambda)}} - \frac{1}{n^2}\Bigr)^2
  < \lambda \Big\|(T+\frac{e^{i\theta}}{n}S)x_{n}\Big\|^2_{A}
+ (1 - \lambda) \Big|{\langle (T+\frac{e^{i\theta}}{n}S)x_n, x_{n}\rangle}_{A}\Big|^2.
\end{align}

It follows from \eqref{I.1.T.7} that
\begin{align*}
{\|T\|}^2_{_{(A, \lambda)}} &- \frac{2}{n^2}{\|T\|}_{_{(A, \lambda)}} + \frac{1}{n^4}\\
& < \lambda{\|Tx_n\|}^2_{A} + (1 - \lambda){|{\langle Tx_n, x_n\rangle}_{A}|}^2 \\
& \quad + \frac{1}{n^2}\Big(\lambda{\|Sx_n\|}^2_{A} + (1 - \lambda){|{\langle Sx_n, x_n\rangle}_{A}|}^2\Big)\\
& \quad + \frac{2}{n}\mathfrak{Re}\Big(e^{i\theta}\lambda{\langle Sx_n, Tx_n\rangle}_{A} +
e^{i\theta}(1-\lambda){\langle x_n, Tx_n \rangle}_{A} {\langle Sx_n,x_n \rangle}_{A}\Big),
\end{align*}
and hence
\begin{align*}
\frac{n}{2}\Big({\|T\|}^2_{_{(A, \lambda)}} &- \lambda{\|Tx_n\|}^2_{A} - (1 - \lambda){|{\langle Tx_n, x_n\rangle}_{A}|}^2\Big)\\
& < \frac{1}{n}{\|T\|}_{_{(A, \lambda)}} - \frac{1}{2n^3} + \frac{1}{2n}\Big(\lambda{\|Sx_n\|}^2_{A} + (1 - \lambda){|{\langle Sx_n, x_n\rangle}_{A}|}^2\Big)\\
& \quad + \mathfrak{Re}\Big(e^{i\theta}\lambda{\langle Sx_n, Tx_n\rangle}_{A} +
e^{i\theta}(1-\lambda){\langle x_n, Tx_n \rangle}_{A} {\langle Sx_n,x_n \rangle}_{A}\Big).
\end{align*}

Since ${\|T\|}^2_{_{(A, \lambda)}} - \lambda{\|Tx_n\|}^2_{A} - (1 - \lambda){|{\langle Tx_n, x_n\rangle}_{A}|}^2\geq 0$, we obtain
\begin{align}\label{I.2.T.7}
0 &< \frac{1}{n}{\|T\|}_{_{(A, \lambda)}} - \frac{1}{2n^3}
+ \frac{1}{2n}\Big(\lambda{\|Sx_n\|}^2_{A} + (1 - \lambda){|{\langle Sx_n, x_n\rangle}_{A}|}^2\Big)\nonumber\\
& \qquad \quad + \mathfrak{Re}\Big(e^{i\theta}\lambda{\langle Sx_n, Tx_n\rangle}_{A} +
e^{i\theta}(1-\lambda){\langle x_n, Tx_n \rangle}_{A} {\langle Sx_n,x_n \rangle}_{A}\Big).
\end{align}

By letting $n\to\infty$ in \eqref{I.2.T.7} and passing through a subsequence if necessary, we get
\begin{align*}
\displaystyle{\lim_{n\to\infty}}\mathfrak{Re}\Big(e^{i\theta}\lambda{\langle Sx_n, Tx_n\rangle}_{A} +
e^{i\theta}(1-\lambda){\langle x_n, Tx_n \rangle}_{A} {\langle Sx_n,x_n \rangle}_{A}\Big)\geq 0.
\end{align*}

Further, by \eqref{I.1.T.7}, we have
\begin{align*}
\lambda{\|Tx_n\|}^2_{A} &+ (1 - \lambda){|{\langle Tx_n, x_n\rangle}_{A}|}^2\\
& > \Big({\|T\|}_{_{(A, \lambda)}} - \frac{1}{n^2}\Big)^2
   - \frac{1}{n^2}\Big(\lambda{\|Sx_n\|}^2_{A} + (1 - \lambda){|{\langle Sx_n, x_n\rangle}_{A}|}^2\Big)\\
& \quad - \frac{2}{n}\mathfrak{Re}\Big(e^{i\theta}\lambda{\langle Sx_n, Tx_n\rangle}_{A} +
e^{i\theta}(1-\lambda){\langle x_n, Tx_n \rangle}_{A} {\langle Sx_n,x_n \rangle}_{A}\Big),
\end{align*}
and therefore, by letting $n\to\infty$, we obtain
\begin{align*}
\displaystyle{\lim_{n\to\infty}}\Big(\lambda{\|Tx_{n}\|}^2_{A} + (1-\lambda){|{\langle Tx_{n}, x_{n}\rangle}_{A}|}^2\Big)
\geq {\|T\|}^2_{_{(A, \lambda)}}.
\end{align*}

Since $\displaystyle{\lim_{n\to\infty}}\Big(\lambda{\|Tx_{n}\|}^2_{A} + (1-\lambda){|{\langle Tx_{n}, x_{n}\rangle}_{A}|}^2\Big)
\leq {\|T\|}^2_{_{(A, \lambda)}}$, we conclude that
\begin{align*}
\displaystyle{\lim_{n\to\infty}}\Big(\lambda{\|Tx_{n}\|}^2_{A} + (1-\lambda){|{\langle Tx_{n}, x_{n}\rangle}_{A}|}^2\Big)
= {\|T\|}^2_{_{(A, \lambda)}}.
\end{align*}

Hence, (i-1) and (i-2) hold.

\end{proof}

The following corollary is a direct consequence of Theorem \ref{T.7}. It gives
a characterization of the Birkhoff--James orthogonality for Hilbert space operators.

\begin{corollary}\label{C.8}

Let $T, S\in \mathbb{B}(\mathcal{H})$. The following statements are equivalent.
\begin{itemize}
\item[(i)] For each $\theta\in[0, 2\pi)$ there exits a sequence $\{x_n\}$ of unit vectors in $\mathcal{H}$ such that
$\displaystyle{\lim_{n\to\infty}}\|Tx_n\| = \|T\|$
and
$\displaystyle{\lim_{n\to\infty}}\mathfrak{Re}\big(e^{i\theta}\langle Sx_n, Tx_n\rangle\big)\geq 0$.
\item[(ii)] For all $\xi \in \mathbb{C}$, $\|T + \xi S\|^2 \geq \|T\|^2 + |\xi|^2[S]^2$.
\item[(iii)] $T \perp_B S$.
\end{itemize}

\end{corollary}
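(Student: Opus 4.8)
The plan is to deduce Corollary \ref{C.8} as the special case of Theorem \ref{T.7} obtained by setting $A = I$ and $\lambda = 1$. The strategy is purely one of specialization: I would substitute these values into each of the three equivalent statements of Theorem \ref{T.7} and verify that they collapse onto the corresponding statements of the corollary. No new estimate is needed; the work lies entirely in recognizing how the various $(A,\lambda)$-quantities degenerate.

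First I would record the relevant identifications. By Remark \ref{R.2}, the choice $\lambda = 1$ gives ${\|T\|}_{_{(A,1)}} = {\|T\|}_A$, and taking $A = I$ yields ${\|T\|}_A = \|T\|$; hence the $(A,\lambda)$-seminorm becomes the usual operator norm. Similarly, with $A = I$ the form ${\langle\cdot,\cdot\rangle}_A$ reduces to $\langle\cdot,\cdot\rangle$ and ${\|\cdot\|}_A$ to $\|\cdot\|$ on vectors. Finally, the auxiliary quantity $m_{_{(A,\lambda)}}(S)$ appearing in statement (ii) of Theorem \ref{T.7} simplifies, at $A = I$ and $\lambda = 1$, to $\inf\{\|Sx\| : \|x\| = 1\} = [S]$, the classical minimum modulus.

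With these identifications in hand, each condition transfers directly. In (i-1), the factor $1 - \lambda$ vanishes, leaving $\lim_n \|Tx_n\|^2 = \|T\|^2$, equivalent to $\lim_n \|Tx_n\| = \|T\|$. In (i-2), the $(1-\lambda)$-product term disappears and the subscripts drop, giving $\lim_n \mathfrak{Re}\big(e^{i\theta}\langle Sx_n, Tx_n\rangle\big) \geq 0$; together these reproduce statement (i) of the corollary. Statement (ii) of Theorem \ref{T.7} becomes $\|T + \xi S\|^2 \geq \|T\|^2 + |\xi|^2 [S]^2$, matching (ii) of the corollary, while the relation $T \perp_{_{(A,\lambda)}} S$ becomes $\|T + \xi S\| \geq \|T\|$ for all $\xi \in \mathbb{C}$, which is precisely $T \perp_B S$. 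Thus the equivalence (i)$\Leftrightarrow$(ii)$\Leftrightarrow$(iii) of Corollary \ref{C.8} follows at once from the corresponding equivalence in Theorem \ref{T.7}.

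Since the argument is a direct specialization, I do not expect a genuine obstacle. The only point demanding care is the correct evaluation of $m_{_{(A,\lambda)}}(S)$ at $\lambda = 1$ and $A = I$, ensuring that it yields the minimum modulus $[S]$ and not some other extremal quantity. Once that identification is confirmed, the proof reduces to the single line ``apply Theorem \ref{T.7} with $A = I$ and $\lambda = 1$.''
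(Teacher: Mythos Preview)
Your proposal is correct and follows exactly the paper's own approach: the paper's proof consists of the single line ``Apply Theorem~\ref{T.7} with $A=I$ and $\lambda = 1$.'' Your detailed verification that each $(A,\lambda)$-quantity specializes correctly (in particular that $m_{_{(A,\lambda)}}(S)$ becomes $[S]$) is sound and simply makes explicit what the paper leaves implicit.
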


\begin{proof}

Apply Theorem \ref{T.7} with $A=I$ and $\lambda = 1$.

\end{proof}

Finally, we get the following result due to Mal et al. \cite{M.P.S}.

\begin{corollary}[{\cite[Theorem~2.3]{M.P.S}}]\label{C.9}

Let $T, S\in \mathbb{B}(\mathcal{H})$. The following statements are equivalent.
\begin{itemize}
\item[(i)] For each $\theta\in[0, 2\pi)$ there exits a sequence $\{x_n\}$ of unit vectors in $\mathcal{H}$ such that
$\displaystyle{\lim_{n\to\infty}}|\langle Tx_n, x_n\rangle| = w(T)$
and
$\displaystyle{\lim_{n\to\infty}}\mathfrak{Re}\big(e^{i\theta}\langle x_n, Tx_n\rangle \langle Sx_n, x_n\rangle\big)\geq 0$.
\item[(ii)] For all $\xi \in \mathbb{C}$, $w^2(T + \xi S) \geq w^2(T) + |\xi|^2c^2(S)$.
\item[(iii)] $T \perp^{w}_B S$.
\end{itemize}

\end{corollary}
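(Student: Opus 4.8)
The plan is to obtain Corollary~\ref{C.9} as the special case $A = I$ and $\lambda = 0$ of Theorem~\ref{T.7}, mirroring exactly how Corollary~\ref{C.8} was derived with $\lambda = 1$. The first step is to substitute these parameter values into the three equivalent conditions of Theorem~\ref{T.7} and verify that each collapses to the corresponding statement in Corollary~\ref{C.9}. When $A = I$, the $A$-inner product ${\langle\cdot,\cdot\rangle}_A$ becomes the ordinary inner product $\langle\cdot,\cdot\rangle$, the constraint ${\|x\|}_A = 1$ becomes $\|x\| = 1$, and $\mathbb{B}_{A^{1/2}}(\h) = \mathbb{B}(\h)$; when $\lambda = 0$, the defining expression $\lambda{\|Tx\|}_A^2 + (1-\lambda){|{\langle Tx,x\rangle}_A|}^2$ reduces to ${|\langle Tx,x\rangle|}^2$. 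By Remark~\ref{R.2} this means ${\|\cdot\|}_{(A,\lambda)}$ becomes $w(\cdot)$, so I would note that $m_{(A,0)}(S)$ reduces to $\inf\{|\langle Sx,x\rangle| : \|x\| = 1\} = c(S)$ and $\perp_{(A,\lambda)}$ becomes $\perp^w_B$.

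Next I would check the three conditions one at a time. Condition (i-1) of Theorem~\ref{T.7} becomes $\lim_{n\to\infty} {|\langle Tx_n,x_n\rangle|}^2 = w^2(T)$, which is equivalent to $\lim_{n\to\infty} |\langle Tx_n,x_n\rangle| = w(T)$, matching the first half of (i) in Corollary~\ref{C.9}. Condition (i-2) becomes $\lim_{n\to\infty}\mathfrak{Re}\big(e^{i\theta}\langle x_n,Tx_n\rangle\langle Sx_n,x_n\rangle\big) \geq 0$, which is precisely the second half. Condition (ii) becomes $w^2(T+\xi S) \geq w^2(T) + |\xi|^2 c^2(S)$, and condition (iii) becomes $T \perp^w_B S$. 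Thus all three conditions of Corollary~\ref{C.9} are the verbatim specializations of the conditions in Theorem~\ref{T.7}, and the equivalence is inherited directly.

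I do not anticipate a genuine obstacle here, since the argument is a routine substitution. The only point requiring a word of care is the passage in (i-1) from the squared limit to the unsquared one: because $w(T) \geq 0$ and the quantity $|\langle Tx_n,x_n\rangle|$ is nonnegative, convergence of the squares to $w^2(T)$ is equivalent to convergence of the moduli to $w(T)$, so no information is lost. With that observation in place, the corollary follows immediately.

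\begin{proof}
Apply Theorem \ref{T.7} with $A = I$ and $\lambda = 0$. By Remark \ref{R.2}, ${\|\cdot\|}_{_{(I, 0)}} = w(\cdot)$, so that $m_{_{(I,0)}}(S) = c(S)$ and $\perp_{_{(I,0)}}$ coincides with $\perp^w_B$. Under these choices, condition (i-1) reads $\lim_{n\to\infty} {|\langle Tx_n, x_n\rangle|}^2 = w^2(T)$, which, since both sides are nonnegative, is equivalent to $\lim_{n\to\infty} |\langle Tx_n, x_n\rangle| = w(T)$; condition (i-2) becomes $\lim_{n\to\infty}\mathfrak{Re}\big(e^{i\theta}\langle x_n, Tx_n\rangle \langle Sx_n, x_n\rangle\big)\geq 0$. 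These are exactly the two statements in (i). Likewise, condition (ii) of Theorem \ref{T.7} becomes $w^2(T + \xi S)\geq w^2(T) + |\xi|^2 c^2(S)$, and condition (iii) becomes $T \perp^w_B S$. The equivalence of (i), (ii), and (iii) therefore follows from Theorem \ref{T.7}.
\end{proof}
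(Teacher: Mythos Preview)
Your proof is correct and follows exactly the paper's approach: the paper's proof consists of the single sentence ``It follows immediately from Theorem \ref{T.7} with $A=I$ and $\lambda = 0$,'' and your argument is simply a careful unpacking of that specialization.
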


\begin{proof}

It follows immediately from Theorem \ref{T.7} with $A=I$ and $\lambda = 0$.
\end{proof}

\textbf{Acknowledgement.}
The authors thank the referees for helpful comments and suggestions.

\bibliographystyle{amsplain}

\end{document}